\tikzset{
math to/.tip={Glyph[glyph math command=rightarrow]},
loop/.tip={Glyph[glyph math command=looparrowleft, swap]},
loop'/.tip={Glyph[glyph math command=looparrowleft]},
 weird/.tip={Glyph[glyph math command=Rrightarrow, glyph length=1.5ex]},
  pi/.tip={Glyph[glyph math command=pi, glyph length=1.5ex, glyph axis=0pt]},
}
\newlength\glyphwidth
\newlength\widthofx
\newsavebox\hatglyphCONTENT
\sbox\hatglyphCONTENT{%
    \addvbuffer[-0.05ex -1.3ex]{$\hat{\phantom{.}}$}%
}
\newcommand\hatglyph{\resizebox{0.6\widthofx}{!}{\usebox{\hatglyphCONTENT}}}
\newcommand\shifthat[2]{%
    \stackengine{0.2\widthofx}{%
        \SavedStyle#2}{%
        \rule{#1}{0ex}\hatglyph}{O}{c}{F}{T}{S}%
}
\newcommand\relativeGlyphOffset[1]{%
    \str_case:nnF{#1}{%
        {A}{0.18}%
        {B}{0.1}%
        {W}{0.02}%
        {J}{0.18}%
        {\phi}{0.17}%
    }{0.05}
}\ExplSyntaxOff
\NewDocumentCommand{\hatt}{mO{#1}}{%
    \ThisStyle{%
        \setlength\glyphwidth{\widthof{$\SavedStyle{}\longleftarrow$}}%
        \setlength\widthofx{\widthof{$\SavedStyle{}x$}}%
        \shifthat{\relativeGlyphOffset{#2}\glyphwidth}{#1}%
  }%
}
\DeclareMathOperator{\rank}{rk}
\def\immerses{\looparrowright}
\def\onto{\twoheadrightarrow} \def\into{\hookrightarrow}
\def\ncl#1{\mathord{\langle}\mskip -4mu plus 0mu minus 0mu
  \mathord{\langle}#1\mathord{\rangle}\mskip -4mu plus 0mu minus 0mu
  \mathord{\rangle}}
\def\real#1{\boldsymbol #1}
\def\rk#1{\rank(#1)}
\newcommand{\comm}[1]{\marginpar{\tiny #1}}
\newtheorem{theorem}{Theorem}[section]
\newtheorem{lemma}[theorem]{Lemma}
\newtheorem{corollary}[theorem]{Corollary}
\newtheorem{conjecture}[theorem]{Conjecture}
\theoremstyle{definition}
\newtheorem{definition}[theorem]{Definition}
\theoremstyle{remark}
\author{Larsen Louder and Henry Wilton\footnote{Supported by EPSRC Standard Grant EP/L026481/1.}}
\newcommand{\Addresses}{{
  \bigskip
  \footnotesize

  L.~Louder, \textsc{Department of Mathematics, University College London, Gower Street, London WC1E  6BT, UK}\par\nopagebreak
  \textit{E-mail address:} \texttt{l.louder@ucl.ac.uk}

  \medskip

  H. Wilton, \textsc{DPMMS, Centre for Mathematical Sciences, Wilberforce Road, Cambridge CB3 0WB, UK}\par\nopagebreak
  \textit{E-mail address:} \texttt{h.wilton@maths.cam.ac.uk}

}}
\title{One-relator groups with torsion are coherent}
\begin{document}
\maketitle

\begin{abstract}
  We show that any one-relator group $G=F/\ncl{w}$  with
  torsion is coherent -- i.e., that every finitely generated subgroup of
  $G$ is finitely presented -- answering a 1974 question of Baumslag in this case.
\end{abstract}


\section{Introduction}

\begin{definition}
  A group $G$ is \emph{coherent} if every finitely generated subgroup
  of $G$ is finitely presentable.
\end{definition}

A well known question of Baumslag asks whether every one-relator group $F/\ncl{w}$ is coherent \cite[p.\ 76]{baumslag_problems_1974}.    It is a curious feature of one-relator groups that the case with torsion, in which the relator $w$ is a proper power, is often better behaved than the general case; most famously, one-relator groups with torsion are always hyperbolic \cite{newman-spelling}, and Wise proved that one-relator groups with torsion are residually finite, indeed linear \cite{wise_structure_2012}.   In this note we answer Baumslag's question affirmatively for one-relator groups with torsion.  
\begin{theorem}
  \label{thm: coherence}
  If $G$ is a one-relator group with torsion -- that is, $G\cong F/\ncl{w^n}$, for $n>1$ -- then $G$ is
  coherent.
\end{theorem}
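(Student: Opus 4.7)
The plan is to induct along the Magnus--Moldavanskii hierarchy of $G = F/\ncl{w^n}$. This hierarchy expresses $G$ as an iterated HNN extension of ``simpler'' one-relator groups with torsion $G'$ over finitely generated free edge subgroups (sitting inside Magnus subgroups on both sides), terminating after finitely many steps at the finite cyclic group $\mathbb{Z}/n\mathbb{Z}$, which is trivially coherent. Because $G$ is hyperbolic by Newman's spelling theorem, all of the edge groups appearing in this hierarchy are quasi-convex, which is the geometric control that makes a subgroup-theoretic induction tractable. The inductive hypothesis to be carried is that the vertex group $G'$ one step down the hierarchy is coherent, and the task is to propagate coherence up through each HNN extension.

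For the inductive step, I would encode a finitely generated subgroup $H\leq G$ as the fundamental group of an immersion of $2$-complexes $Y \to X$, where $X$ is the natural presentation complex of $G$ with a single $2$-cell whose attaching map is $w^n$. A finite Stallings-style core for the action of $H$ on the Bass--Serre tree of the HNN splitting controls the $1$-skeleton of $Y$, so the question reduces to bounding the number of $2$-cells of $Y$. To do this I would invoke a \emph{negative immersions} theorem for one-relator complexes with torsion: any compact connected immersion $Y\to X$ satisfies a strong Euler-characteristic constraint (of the form $\chi(Y) \leq 1$, with enough rigidity in the boundary case), and combining this with the inductive hypothesis on $G'$ should force $Y$ to be compact, yielding a finite presentation of $H$.

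The principal obstacle is that coherence is \emph{not} in general preserved under HNN extensions of coherent groups along finitely generated free subgroups --- there are well-known counterexamples, e.g.\ from Rips-style constructions --- so the induction cannot be purely algebraic. The resolution must exploit the torsion hypothesis in an essential way: the $n$-fold symmetry of the relator $w^n$ gives the presentation complex combinatorial rigidity, captured precisely by the negative immersions machinery, that is entirely unavailable when $n=1$. The technical heart of the argument will be to align the Magnus splitting of $G$ with the combinatorial structure of the immersion $Y\to X$, and to extract from the $n$-fold symmetry the bound that closes the inductive step from $G'$ to $G$.
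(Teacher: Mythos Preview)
Your proposal sets up a Magnus--Moldavanskii induction but never actually closes the inductive step; you correctly identify the obstacle (HNN extensions over free subgroups do not preserve coherence) and then defer its resolution to an unspecified ``alignment'' of the Bass--Serre picture with an immersion $Y\to X$. Two concrete problems: first, the bound you quote, $\chi(Y)\le 1$, is trivially true for any connected $2$-complex and gives no control over the number of $2$-cells; the inequality that does work is the one in Corollary~\ref{cor: two cells bound}, namely $\chi(Y)+(n-1)\cdot|\{\text{2-cells}\}|\le 0$ for irreducible $Y$, which bounds the number of $2$-cells by $\rk{\pi_1 Y}$. Second, a Stallings core for the $H$-action on the Bass--Serre tree of a \emph{single} HNN step describes a graph-of-groups decomposition of $H$ with vertex groups in conjugates of $G'$; it does not by itself produce or control the $1$-skeleton of an immersion into the presentation complex $X$, and you have not explained how to pass between the two pictures. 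Without that, the appeal to the inductive hypothesis on $G'$ is not connected to the immersion bound, and the argument is circular in outline.

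The paper avoids the hierarchy entirely. It passes once to an unwrapped finite cover $X_0\immerses X$ (using that one-relator groups are virtually torsion-free, Theorem~\ref{thm: Unwrapped cover}), so that every $2$-cell maps by degree $n$. A folding argument (Lemmas~\ref{lem: Folding 2-complexes}--\ref{lem: direct limit}) expresses any finitely generated $H\le G_0$ as $\varinjlim\pi_1 Y_i$ for a sequence of irreducible immersions $Y_i\immerses X_0$ with uniformly few free edges. Corollary~\ref{cor: two cells bound}, which is where the torsion hypothesis $n>1$ enters via the $w$-cycles theorem, bounds the number of $2$-cells of each $Y_i$ by $\rk{\pi_1 Y_i}\le\rk H$, hence bounds all cells of $Y_i$, so the sequence stabilises and $H$ is finitely presented. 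The point is that once you have the sharp $2$-cell bound, no induction on a hierarchy is needed; conversely, the hierarchy alone, without such a bound, runs into exactly the HNN obstruction you flagged (this is why Wise's earlier hierarchy-based argument in \cite{wise_coherence_2005} was conditional on Conjecture~\ref{conj: NPI implies coherence}).
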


In 2003, Wise circulated \cite{wise_coherence_2003}, including a purported proof of the following conjecture, stated as Theorem 4.13 of that paper.  

\begin{conjecture}\label{conj: NPI implies coherence}
If $X$ is a compact 2-complex with non-positive immersions then $\pi_1X$ is coherent.
\end{conjecture}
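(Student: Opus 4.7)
The plan is to exhibit, for each finitely generated $H \leq \pi_1 X$, a compact 2-complex $Z$ together with an immersion $Z \looparrowright X$ realising $\pi_1 Z \cong H$; finite presentability of $H$ then follows immediately. The first step is to produce \emph{some} immersion $Y \looparrowright X$ with $\pi_1 Y \cong H$, even if $Y$ is not compact. I would do this in the style of Howie's towers: begin with any map $K \to X$ from a finite 2-complex $K$ whose image in $\pi_1 X$ equals $H$, and factor it through $X$ by iteratively passing to covers of immersion images. After maximal factorisation, the resulting map $Y \looparrowright X$ is an immersion, $\pi_1 Y \cong H$, and $K$ maps into $Y$ in a $\pi_1$-surjective fashion.

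Next, exhaust $Y$ by an ascending chain of compact connected subcomplexes $Y_1 \subset Y_2 \subset \cdots$ with union $Y$, arranged so that each inclusion $Y_i \into Y$ is surjective on $\pi_1$; this is possible because $H$ is finitely generated. Each $Y_i$ is a compact 2-complex immersing in $X$, so the hypothesis of non-positive immersions gives $\chi(Y_i) \leq 0$ (assuming $H$ is non-trivial, which we may). The goal is to show that some inclusion $Y_i \into Y$ is actually a $\pi_1$-isomorphism, for then we may take $Z = Y_i$.

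The heart of the argument is to bound the kernel of $\pi_1 Y_i \onto H$. Any non-trivial element of this kernel is a loop in $Y_i$ bounding a disk diagram in $Y$; adjoining the image of this diagram to $Y_i$ yields a larger compact subcomplex $Y_i' \subset Y$ that still immerses in $X$, with $\chi(Y_i')$ increased and the kernel strictly reduced. Iterating produces a sequence of compact connected immersed 2-complexes with growing Euler characteristic and non-trivial $\pi_1$; after at most $-\chi(Y_i)$ steps, the Euler characteristic would be forced strictly positive, contradicting non-positive immersions. Hence the kernel dies after boundedly many disk attachments, furnishing the desired finite $Z$.

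I expect the principal obstacle to be the bookkeeping in the last step. One must ensure that each disk attachment genuinely increases $\chi$ and genuinely reduces the kernel, without introducing cancelling pairs of 2-cells or attaching disks whose boundaries were already null-homotopic in $Y_i$. This will require a careful choice of \emph{minimal} disk diagram (or a minimal counterexample argument) together with combinatorial control on 2-cell cancellations, analogous to the termination argument underlying Stallings foldings in the 1-dimensional setting. A secondary subtlety is ensuring that the tower-lifting step actually terminates in a well-defined immersion with $\pi_1 Y \cong H$, since folding procedures for 2-complexes need not terminate in general; this is precisely the place where the non-positive immersions hypothesis, through the Euler characteristic bound, must be leveraged to force finiteness.
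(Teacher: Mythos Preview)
The statement you are attempting to prove is not a theorem of the paper: it is stated as Conjecture~1.2, and the paper explicitly notes that it \emph{remains open}. Wise circulated a purported proof in 2003, and the paper records that Bestvina found a gap in that argument. So there is no ``paper's own proof'' to compare against; the paper instead proves coherence for one-relator groups with torsion by a different route that bypasses the conjecture entirely (using the $w$-cycles theorem to bound the number of 2-cells of irreducible immersed subcomplexes directly, via Corollary~3.2, and then a folding/direct-limit argument).

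Your outline is essentially Wise's 2003 strategy, and the step you flag as the ``principal obstacle'' is exactly where the known gap lies --- it is not mere bookkeeping. Concretely, your claim that passing from $Y_i$ to $Y_i' = Y_i \cup \mathrm{im}(D) \subset Y$ strictly increases Euler characteristic is unjustified. The image of the disc diagram in $Y$ need not be a disc; it may contribute more new 1-cells than 2-cells, and its intersection with $Y_i$ may be far larger than the boundary loop, so the inclusion--exclusion computation $\chi(Y_i') = \chi(Y_i) + \chi(D) - \chi(\partial D)$ simply does not apply. Choosing $D$ to be reduced or minimal does not obviously help: minimality controls cancelling pairs of 2-cells inside $D$, but says nothing about how $\mathrm{im}(D)$ sits relative to $Y_i$ inside $Y$. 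Without a strict monotone quantity, the termination argument collapses, and no one has found a replacement that works in this generality. Your secondary worry about the tower step is, by contrast, not a real issue: one can simply take $Y$ to be the covering space of $X$ corresponding to $H$, which is an immersion with $\pi_1 Y \cong H$ on the nose.
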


The reader is referred to, for instance, \cite{louder-wilton} or \cite{helfer-wise} for the definition of non-positive immersions.  On his webpage, Wise acknowledges that there is a gap (found by Mladen Bestvina) in the proof of \cite[Theorem 4.13]{wise_coherence_2003}.   In \cite{wise_coherence_2005}, Wise used Conjecture \ref{conj: NPI implies coherence} (stated as Theorem 1.5 of that paper) in a proof that Theorem \ref{thm: coherence} follows from the Strengthened Hannah Neumann Conjecture. The latter conjecture has more recently been proved independently by Friedman \cite{friedman_sheaves_2015} and Mineyev \cite{mineyev_submultiplicativity_2012}.

In summary, the results of \cite{wise_coherence_2005} are conditional on Conjecture \ref{conj: NPI implies coherence}, which remains open, and therefore Theorem \ref{thm: coherence} was not hitherto known unconditionally.    After our proof was circulated, we learned from Wise that he has also given an unconditional proof of Theorem \ref{thm: coherence} in  \cite{wise_coherence_2018}, a revised version of \cite{wise_coherence_2003}.

Our proof (and also that of \cite{wise_coherence_2018}) uses Wise's $w$-cycles conjecture (Theorem \ref{wcycles}), which was proved independently by the authors \cite{louder-wilton} and by Helfer--Wise \cite{helfer-wise}.

The outline of the proof is as follows.  We realize $G$ as the
fundamental group of a compact, aspherical orbicomplex $X$. Since one-relator groups  are virtually torsion-free, there is a finite-sheeted covering map
$X_0\immerses X$ so that $G_0=\pi_1X_0$ is torsion free.  We then use
the $w$-cycles conjecture to show that, whenever $Y\immerses X_0$ is
an immersion from a compact two-complex without free faces, the number
of two-cells in $Y$ is bounded by the number of generators of
$\pi_1(Y)$.  In the final step, a folding argument expresses an
arbitrary finitely generated subgroup $H$ of $G_0$ as a direct limit
of fundamental groups of 2-complexes with boundedly many 2-cells, and
we deduce that $H$ is finitely presented.

\subsection*{Acknowledgements}

Thanks to Jim Howie and Hamish Short for pointing out an error in Section 4 of an earlier version.

\section{One-relator orbicomplexes}

\label{section: orbicomplexes}

Let $F$ be a finitely generated free group, and $G=F/\ncl{w^n}$ a one-relator group, where $w\in F$ is not a proper power.  In the usual way, $F$ can be realized as the fundamental group of some finite topological graph $\Gamma$, and $w$ by a continuous map $w:S^1\to\Gamma$. (Since we are only interested in $w$ up to conjugacy, we ignore base points.)  Let $D_n\subseteq\mathbb{C}$ be the closed unit disk equipped with a cone point of order $n$ at the origin. The orbicomplex
\[
X= \Gamma \cup_w D_n
\]
provides a natural model for $G$, in the sense that $G$ is the
(orbifold) fundamental group of $X$.  We call $X$ a \emph{one-relator
  orbicomplex}.  (There is a much more general theory of orbicomplexes
-- see, for instance, \cite{haefliger_complexes_1991} or \cite[Chapter
  III.$\mathcal{C}$]{bridson_metric_1999} -- but the one-relator
orbicomplexes defined here are sufficient for our purposes.) When
$n=1$, $X$ is a \emph{one-relator complex}.

 
A map of 2-complexes is a \emph{morphism} if it sends $n$-cells
homeomorphically to $n$-cells, for $n=0,1,2$.  A morphism of
2-complexes $Y\to Z$ is an \emph{immersion} if it is a local
injection; in this case, we write $Y\immerses Z$.  If $Y$ is a
2-complex and $X$ is the one-relator orbicomplex defined above, a continuous map
$Y\to X$ is a \emph{morphism} if it sends vertices to vertices, edges
homeomorphically to edges, and restricts, on each 2-cell, to the
standard degree-$n$ map $p_n:D_1\to D_n$ given by $p_n(z)=z^n$.  A
morphism $Y\to X$ is an \emph{immersion} if it is locally injective
away from the cone points in the 2-cells (again, we write $Y\immerses X$), and a \emph{covering} if it
is locally a homeomorphism except at the cone points.  The next definition plays a crucial
role in our argument.

\begin{definition}[Degree]\label{defn: Degree}
If $f:Y\immerses X$ is an immersion of two-dimensional (orbi)complexes, then the \emph{degree} of $f$, denoted by $\deg f$, is the minimum number of preimages of a generic point in a 2-cell of $X$. That is: if $X$ is a 2-complex, then $\deg f$ is the minimum number of preimages of any point in the interior of a 2-cell of $X$; and if $X$ is a one-relator orbicomplex with 2-cell $D_n$, then $\deg f$ is the number of preimages of any point in the interior of $D_n$ except $0$.
\end{definition}

Every one-relator group is virtually torsion free  \cite{fischer_one-relator_1972}, and it follows that the orbi-complex $X$ is finitely covered by a genuine 2-complex.  This can be seen using the covering theory for complexes of groups developed in \cite{bridson_metric_1999}, but we give a low-tech proof below.

\begin{theorem}[Unwrapped covers of one-relator complexes]\label{thm: Unwrapped cover}
  Let $G= F/\ncl{w^n}$ be a one-relator group with $w$ not a proper
  power, and $X$ the orbicomplex defined above.  Then there is a
  finite-sheeted covering map
  \[
 X_0\immerses X
 \]
 where $X_0$ is a compact, connected 2-complex.
\end{theorem}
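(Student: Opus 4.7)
The plan is to find a torsion-free finite-index subgroup $G_0\le G$ via the Fischer--Karrass--Solitar theorem \cite{fischer_one-relator_1972}, and then to build the associated orbifold cover $X_0\to X$ combinatorially, checking directly that the torsion-freeness of $G_0$ eliminates the cone point.

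Let $F_0\le F$ be the preimage of $G_0$ under $F\onto G$. Then $F_0$ contains $\ncl{w^n}$ and has finite index $[G:G_0]$ in $F$, so the associated Schreier coset graph $\Gamma_0\to\Gamma$ is a finite connected covering. The combinatorial heart of the argument is to analyse how lifts of the attaching loop $w$ cycle through $\Gamma_0$. Using the right action of $F$ on $F/F_0$, the orbit through any coset $gF_0$ has length equal to the smallest $k>0$ with $w^k\in F_0$, equivalently (passing to $G$) the smallest $k$ with $\bar w^k\in G_0$. The crucial observation is that $\bar w$ has order exactly $n$ in $G$ (a classical fact about torsion in one-relator groups), and since $G_0$ is torsion-free, $\langle\bar w\rangle\cap G_0=1$ and hence every such orbit has length exactly $n$.

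Consequently, $w^n$ lifts to an embedded closed loop in $\Gamma_0$ on each of the $[F:F_0]/n$ orbits. I would then form $X_0$ by attaching one $2$-cell to $\Gamma_0$ along each such lifted loop, obtaining a compact connected 2-complex, and define the natural morphism $X_0\to X$ by the cover $\Gamma_0\to\Gamma$ on the 1-skeleton and by the standard degree-$n$ map $z\mapsto z^n$ on each attached disk. That this is a covering then reduces to a local calculation: at regular interior points of $D_n$, each of the $[F:F_0]/n$ disks contributes $n$ preimages, for a total of $[F:F_0]=[G:G_0]$; at the cone point, each attached disk contributes a single preimage, whose local group is the quotient $\mathbb{Z}/n$ by the winding of the boundary, hence trivial. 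Thus $X_0$ has no cone points, i.e., is a genuine 2-complex.

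The main obstacle is the uniformity of orbit lengths, which is where the torsion-freeness of $G_0$ interacts essentially with the fact that $\bar w$ realises the full $\mathbb{Z}/n$ torsion in $G$; given that, the rest of the construction is routine bookkeeping, and in particular avoids the machinery of complexes of groups.
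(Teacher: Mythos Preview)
Your argument is correct in spirit and reaches the same conclusion by essentially the same mechanism as the paper, though packaged differently. The paper introduces the auxiliary honest 2-complex $X'=\Gamma\cup_{w^n}D$, passes to its finite cover $X'_0$ corresponding to $G_0$, and then asserts (without further justification) that the 2-cells of $X'_0$ fall into families of cardinality $n$ sharing a common attaching map; collapsing each family yields $X_0$, and the composition $X_0\hookrightarrow X'_0\to X'\to X$ is the desired cover. Your orbit analysis is exactly what underlies that unproved assertion: the families are the $\langle w\rangle$-orbits on the fibre, and each has size $n$ precisely because $G_0$ is torsion-free and $\bar w$ has order $n$. So you make explicit the one step the paper leaves to the reader, while the paper's detour through $X'$ makes the verification that $X_0\to X$ is a covering essentially automatic (it is a composition of a covering map with retractions and morphisms already known to behave well). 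The two routes are equivalent.

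One imprecision you should fix: there is no natural right action of $F$ on the set of \emph{left} cosets $F/F_0$ unless $F_0$ is normal, and in any case the orbit length through the coset indexed by $g$ is the least $k>0$ with $w^k\in g^{-1}F_0g$ (equivalently $\bar w^{\,k}\in g^{-1}G_0g$), not with $w^k\in F_0$. Your conclusion survives unchanged, since every conjugate of a torsion-free subgroup is torsion-free and hence meets $\langle\bar w\rangle$ trivially; alternatively, simply replace $G_0$ at the outset by its normal core in $G$, which is still of finite index and torsion-free, and then your statement becomes literally correct.
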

\begin{proof}
Let $X'$ be $\Gamma\cup_{w^n} D$, the (genuine) two-complex that is
the result of gluing a disc $D$ to the graph $\Gamma$ along the map
$w^n$.  Note that there is a natural morphism $X'\to X$ that induces
an isomorphism of fundamental groups.  Let $G_0$ be a torsion-free
subgroup of finite index in $G$, and let $X'_0\to X'$ be the
corresponding covering space. Since $w$ has order $n$ in $G$ \cite[Corollary~4.11]{magnus_combinatorial_2004}, the
2-cells of $X'_0$ come in families of cardinality $n$, such that all
the 2-cells in each family have the same attaching map. Let $X_0$ be
the quotient of $X'_0$ obtained by collapsing each family to a single
2-cell. Picking one 2-cell from each family specifies an inclusion
$X_0\into X'_0$, and the quotient map $X'_0\to X_0$ is then visibly a
retraction, and an isomorphism on fundamental groups by the
Seifert--van Kampen theorem. The composition
\[
X_0\into X'_0\immerses X'\to X
\]
is the required covering map.
\end{proof}

We emphasize that the complex $X_0$ in the above theorem is a 2-complex, not just an orbicomplex. That is, the covering map $X_0\to X$ restricts to $p_n$ on each 2-cell.  We will call such a cover $X_0$ \emph{unwrapped}.

\section{A bound on the number of 2-cells}

A two-complex $Y$ is \emph{reducible} if it has a free face.\footnote{Note that this definition is slightly stronger than the definition given in \cite{louder-wilton}, where a 2-complex was called `reducible' if it has a free face or a free edge.  The definition given here is more convenient in this context, since the complexes $Y_i$ produced by Lemma \ref{lem: direct limit} are irreducible in this sense, but not in the sense of \cite{louder-wilton}.} Writing
\[
\partial_Y:\coprod S^1 \to Y^{(1)}
\]
for the disjoint union of the attaching maps of the 2-cells, this means that  there is an edge
$e$ of the 1-skeleton $Y^{(1)}$ such that $\partial_Y^{-1}(e)$ consists of a single edge in $\coprod S^1$.   If $e$ is such an edge and $Y'$ is the 2-complex obtained by collapsing the face of $Y$ incident at $e$, then the natural inclusion map $Y'\into Y$ is a homotopy equivalence, and induces an isomorphism on fundamental groups. Of course, if $Y$ is not reducible it is called \emph{irreducible}.

The main theorem of~\cite{louder-wilton} (or \cite{helfer-wise}) can be restated as a result about immersions to one-relator orbicomplexes, as follows.

\begin{theorem}
  \label{wcycles}
  Let $X$ be a one-relator orbicomplex, $Y$ a finite 2-complex and $f:Y\immerses X$ an immersion.  If $Y$ is irreducible then $\chi(Y^{(1)})+\deg f \leq 0$.
 \end{theorem}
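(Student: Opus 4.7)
My plan is to deduce Theorem~\ref{wcycles} from the classical main theorem of~\cite{louder-wilton} or~\cite{helfer-wise}---which concerns immersions to a genuine one-relator complex $\bar X := \Gamma \cup_w D$ with relator $w$ not a proper power---via an unwrapping construction that replaces each 2-cell $\sigma$ of $Y$ (mapping via $p_n$ to $D_n$) by $n$ 2-cells each mapping homeomorphically to the single 2-cell $D$ of $\bar X$. The output will be a new 2-complex $\bar Y$ together with an immersion $\bar f : \bar Y \immerses \bar X$ satisfying $\bar Y^{(1)} = Y^{(1)}$ and having exactly $\deg f$ many 2-cells, to which I can apply the classical theorem to obtain the desired bound.

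The essential preliminary is to show that, for each 2-cell $\sigma$ of $Y$, the attaching map $\gamma_\sigma : \partial D_1 \to Y^{(1)}$ decomposes as $\alpha_\sigma^n$ for some closed loop $\alpha_\sigma$ based at $\gamma_\sigma(1)$ lifting $w$. Since $p_n$ restricts on $\partial D_1$ to the degree-$n$ cover $z \mapsto z^n$, $\gamma_\sigma$ is the unique lift of $w^n$ at $\gamma_\sigma(1)$ along the immersion $Y^{(1)} \to \Gamma$. Writing $\tau$ for the partial function sending a vertex $v$ to the endpoint of the lift of $w$ based at $v$, the vertex $\gamma_\sigma(1)$ is fixed by $\tau^n$; I need to show that it is already fixed by $\tau$. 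If instead the $\tau$-orbit of $\gamma_\sigma(1)$ had length $k > 1$, so that $\gamma_\sigma = \beta_\sigma^{n/k}$ with $\beta_\sigma$ lifting $w^k$, I would analyze the corner contributions of $\sigma$ at the vertices of this orbit, using the link-injectivity required of an immersion together with the irreducibility hypothesis, to force some edge traversed by $\beta_\sigma$ to be incident to $\sigma$ with multiplicity one, contradicting irreducibility.

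Given the decomposition $\gamma_\sigma = \alpha_\sigma^n$, the construction of $\bar Y$ is straightforward: set $\bar Y^{(1)} := Y^{(1)}$ and, for each 2-cell $\sigma$ of $Y$, attach $n$ fresh 2-cells along copies of $\alpha_\sigma$. Define $\bar f$ as $f$ on the 1-skeleton and as a homeomorphism onto $D$ on each fresh 2-cell. Under the canonical identification of positions on $\alpha_\sigma^n$ with pairs (copy index, position on $\alpha_\sigma$), the link of each vertex of $\bar Y$ has exactly the same corners and half-edges as in $Y$, so $\bar f$ inherits link-injectivity from $f$ and is therefore an immersion. Moreover, every edge of $Y^{(1)}$ appears in the attaching maps of $\bar Y$ with the same total multiplicity as it did in $Y$, so $\bar Y$ is irreducible if and only if $Y$ is.

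Finally, the classical $w$-cycles theorem applied to $\bar f : \bar Y \immerses \bar X$ gives $\chi(\bar Y^{(1)}) + c' \leq 0$, where $c' = nc = \deg f$ is the number of 2-cells of $\bar Y$ (with $c$ the number of 2-cells of $Y$). Since $\bar Y^{(1)} = Y^{(1)}$, this rearranges to $\chi(Y^{(1)}) + \deg f \leq 0$. The main obstacle is establishing the decomposition $\gamma_\sigma = \alpha_\sigma^n$; the rest is routine bookkeeping, but this claim is what allows an orbifold-level immersion to descend to the setting of the classical theorem.
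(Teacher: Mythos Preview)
Your central claim---that the attaching map $\gamma_\sigma$ of each 2-cell of $Y$ decomposes as $\alpha_\sigma^n$ for a closed lift $\alpha_\sigma$ of $w$---is not merely hard to prove: it is incompatible with the immersion hypothesis whenever $n>1$. Recall that $f$ restricts to $p_n$ on each 2-cell, so a corner of $\sigma$ at boundary position $k$ maps to the corner of $D_n$ at position $k\bmod |w|$. If $\gamma_\sigma=\alpha_\sigma^n$ with $\alpha_\sigma$ closed of length $|w|$, then $\gamma_\sigma$ visits the basepoint at positions $0,|w|,2|w|,\dots,(n-1)|w|$, giving $n$ distinct corners of $\sigma$ at that vertex, all of which map to the \emph{same} corner (position $0$) of $D_n$. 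This destroys local injectivity of $f$ at that vertex. In the language of your partial function $\tau$, the immersion condition forces the $\tau$-orbit of $\gamma_\sigma(1)$ to have size exactly $n$, not $1$; your proposed irreducibility argument is aimed at the wrong conclusion. For the same reason, even if such $\alpha_\sigma$ existed, attaching $n$ fresh 2-cells along the same loop $\alpha_\sigma$ and sending them all to the single disc of $\bar X$ would again produce $n$ corners at the basepoint with identical image, so $\bar f$ could not be an immersion either.

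The paper avoids this entirely: \cite[Theorem~1.2]{louder-wilton} is already stated for circles $\mathbb{S}$ that cover the $w$-circle with arbitrary degree, and its conclusion bounds $\chi(\Gamma')$ plus the \emph{total degree} of $\sigma:\mathbb{S}\to S^1$. Taking $\Gamma'=Y^{(1)}$, $\Lambda=w$, $\Lambda'=\partial_Y$ and $\sigma$ the boundary map to $\partial D_n$, each component of $\mathbb{S}$ covers $S^1$ with degree $n$, so the total degree is $n\cdot|\{\text{2-cells of }Y\}|=\deg f$. After stripping free edges (which only increases $\chi(Y^{(1)})$), the hypotheses of the cited theorem hold because $Y$ is irreducible, and the inequality $\chi(Y^{(1)})+\deg f\le 0$ follows directly. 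No unwrapping of 2-cells is needed or possible.
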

 \begin{proof}
 This follows from \cite[Theorem 1.2]{louder-wilton}, with $\Gamma=X^{(1)}$, $\Gamma'=Y^{(1)}$, $\rho$ the restriction of $f$ to $\Gamma'$, $\Lambda=w$, and $\mathbb{S}$ the disjoint union of the boundaries of the 2-cells of $Y$.  If some edge of $Y^{(1)}$ is not hit by a 2-cell, then we may remove that edge, increasing $\chi(Y^{(1)})$.  Otherwise, $Y$ is reducible in the sense of \cite{louder-wilton}, so \cite[Theorem 1.2]{louder-wilton} applies, taking $\Lambda'=\partial_Y$ and $\sigma$ the induced map from the boundaries of the 2-cells of $Y$ to the boundary of the 2-cell of $X$. 
 \end{proof}
 
Here, we apply Theorem \ref{wcycles} to relate the number of 2-cells of $Y$ to the rank of its fundamental group. (By the \emph{rank} of a group, we mean the minimal cardinality of a generated set.)

\begin{corollary}
  \label{cor: two cells bound}
 Let $f\colon Y\immerses X$ be an immersion from a finite, irreducible 2-complex $Y$ to a one-relator orbicomplex $X$. Then
  \[
    \chi(Y)+(n-1)\vert\{\mbox{\emph{2-cells in }}Y\}\vert\leq 0~.
  \]
  In particular,
  \[
    \vert\{\mbox{\emph{2-cells in }}Y\}\vert\leq
    \frac{\rk{\pi_1(Y)}-1}{n-1}<\rk{\pi_1(Y)}
  \]
  if $n>1$.
\end{corollary}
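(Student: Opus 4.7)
The plan is to convert the bound in Theorem~\ref{wcycles}, which is phrased in terms of $\chi(Y^{(1)})$ and $\deg f$, into a bound involving $\chi(Y)$ and the number $k$ of $2$-cells of $Y$, using two simple accounting identities: one relating $\deg f$ to $k$, and one relating $\chi(Y)$ to $\chi(Y^{(1)})$.

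First, I would compute $\deg f$. Since $f\colon Y\to X$ is a morphism of orbicomplexes, its restriction to each $2$-cell of $Y$ is the standard degree-$n$ map $p_n\colon D_1\to D_n$, $z\mapsto z^n$. By definition~\ref{defn: Degree}, $\deg f$ counts preimages of a point in the interior of $D_n$ other than $0$, and each such point has exactly $n$ preimages in each $2$-cell of $Y$. Hence $\deg f = nk$.

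Next I would apply Theorem~\ref{wcycles} to obtain $\chi(Y^{(1)}) + nk \leq 0$. Counting cells gives $\chi(Y) = \chi(Y^{(1)}) + k$, so rearranging yields
\[
\chi(Y) + (n-1)k \leq 0,
\]
which is the first claimed inequality. For the second, assuming $Y$ connected so that $\pi_1(Y)$ is defined, the standard estimate $\chi(Y) = 1 - b_1(Y) + b_2(Y) \geq 1 - \rk{\pi_1(Y)}$ (from $b_1 \leq \rk{\pi_1}$ and $b_2 \geq 0$) combines with the first inequality to give $(n-1)k \leq \rk{\pi_1(Y)} - 1$. The final strict inequality $\frac{\rk{\pi_1(Y)}-1}{n-1} < \rk{\pi_1(Y)}$ is an elementary arithmetic consequence of $n>1$.

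There is no real obstacle: Theorem~\ref{wcycles} does all the genuine work, and this corollary is just a translation of its conclusion into the more convenient language of $2$-cell counts and fundamental group rank. The only thing to be slightly careful about is keeping track of the factor of $n$ coming from the cone-point structure of $D_n$, which is precisely what distinguishes the orbicomplex ($n>1$) case from the $2$-complex ($n=1$) case and is what makes the resulting bound on $k$ nontrivial.
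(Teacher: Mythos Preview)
Your proposal is correct and follows essentially the same argument as the paper: compute $\deg f = nk$ from the definition of morphisms to orbicomplexes, combine Theorem~\ref{wcycles} with $\chi(Y)=\chi(Y^{(1)})+k$ to get the first inequality, and then invoke $1-\rk{\pi_1 Y}\leq\chi(Y)$ for the second. The only cosmetic difference is that you justify the last inequality via Betti numbers, whereas the paper simply cites it as a trivial fact about connected $2$-complexes.
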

\begin{proof}
  By Theorem~\ref{wcycles}, $\chi(Y^{(1)})+\deg(f)\leq 0$. Since $f$ restricts to $p_n$ on each 2-cell of $Y$, it follows that
  \[
    \deg f=n\vert\{\mbox{2-cells in }Y\}\vert
  \]
  since $X$ is one-relator.

  The Euler characteristic of $Y$ is the Euler characteristic of $Y^{(1)}$ plus the
  number of two-cells in $Y$, so Theorem~\ref{wcycles} implies the
  first assertion.  The second assertion now follows from the first, using the trivial fact that
   \[
  1-\rk{\pi_1Y}\leq~\chi(Y)
  \]
  for any connected 2-complex $Y$.
\end{proof}

In the case with torsion, Corollary \ref{cor: two cells bound} gives a bound on the the number of 2-cells of an immersion in terms of the rank of the fundamental group. In order to make a connection to arbitrary finitely generated subgroups of $G$, we use folding, in the spirit of Stallings.

\section{Folding}

\label{section: folding}

Folding was introduced by Stallings to study free groups and their subgroups. The next lemma extends \cite[Algorithm 5.4]{stallings-folding} to the context of 2-complexes and their morphisms.

\begin{lemma}\label{lem: Folding 2-complexes}
  Any morphism of finite 2-complexes $A\to B$ factors as 
  \[
    A\to C\immerses B
  \]
  where $A\to C$ is surjective and $\pi_1$-surjective.  Furthermore, if $A\to B$ factors as
  \[
  A\to D\immerses B
  \]
  then there is a unique immersion $C\immerses D$ so that the following diagram commutes.
  \begin{center}
  \begin{tikzcd}
 A\arrow{rr}\arrow{dr} && D\arrow[loop-math to]{r} & B \\
  & C\arrow[loop-math to,dotted]{ur}{\exists!}\arrow[loop-math to, bend right=15]{urr} & 
  \end{tikzcd}
\end{center}
  
      \end{lemma}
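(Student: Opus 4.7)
The plan is to adapt Stallings' folding algorithm to the setting of morphisms of 2-complexes. Starting from the morphism $A\to B$, I will iteratively identify pairs of cells of $A$ that are forced together by the failure of local injectivity, until the resulting map is an immersion.

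There are two types of failure of local injectivity, corresponding to the two possible collisions in the induced maps of links at a vertex: (i) two distinct edges $e_1,e_2$ sharing a vertex $v$ of $A$ map to the same edge of $B$, or (ii) two distinct 2-cells $D_1,D_2$ of $A$ map to the same 2-cell $D$ of $B$, with an edge of $\partial D_1$ shared with $\partial D_2$ and sent to a common edge of $\partial D$. In case (i) I perform a classical \emph{edge fold}, identifying $e_1$ with $e_2$; in case (ii) I perform a \emph{2-cell fold}, identifying $D_1$ with $D_2$ via the homeomorphism $D_1\to D\to D_2$, which may force further identifications in the 1-skeleton. Each fold is a surjective quotient $A\twoheadrightarrow A'$ over $B$ and is $\pi_1$-surjective: for edge folds this is classical Stallings, and for 2-cell folds it follows because the subcomplex $D_1\cup D_2$ being quotiented has simply connected point-identifications.

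Each fold strictly reduces the total number of cells, so because $A$ is finite the iteration terminates after finitely many steps, producing a factorization $A\to C\to B$ admitting no further fold. By construction $C\to B$ is locally injective, hence an immersion, and the composition $A\to C$ is surjective and $\pi_1$-surjective as a composition of surjective, $\pi_1$-surjective maps. For the universal property, suppose $A\to D\immerses B$ is a second factorization. I claim, by induction on the number of folds used to produce $C$, that $A\to D$ descends through each intermediate quotient: at each stage, two cells of $A_i$ being identified must already have the same image in $D$, since otherwise $D\to B$ would fail to be locally injective at their common image. Passing to the limit gives a map $C\to D$; it is unique because $A\to C$ is surjective, and it is automatically an immersion because its composition with the immersion $D\to B$ equals the immersion $C\to B$.

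The main obstacle will be the bookkeeping around 2-cell folds: identifying two 2-cells via a homeomorphism through $D$ forces an identification of their entire attaching maps, which can cascade into further identifications in the 1-skeleton and create new link collisions elsewhere. Verifying that the resulting object is a well-defined 2-complex, that each fold lowers a well-chosen complexity measure so the process terminates, and that $\pi_1$-surjectivity is preserved through these cascades, is the main technical content, but can be done by tracking the quotient topology directly.
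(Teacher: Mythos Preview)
Your approach is correct but differs from the paper's in a way worth noting. You interleave edge folds and 2-cell folds, performing whichever is available until none remain. The paper instead separates the two dimensions: first fold the 1-skeleton $A^{(1)}\to C^{(1)}\immerses B^{(1)}$ completely using Stallings, then push the attaching maps of the 2-cells of $A$ forward to $C^{(1)}$ and identify any two 2-cells that have the same image in $B$ and equal attaching maps in $C^{(1)}$. This two-stage construction makes the check that $C\to B$ is an immersion a one-line link argument (once $C^{(1)}\immerses B^{(1)}$, two 2-cells of $C$ colliding at a point must have attaching maps that agree everywhere, hence were already identified), and $\pi_1$-surjectivity is inherited directly from the graph case, since attaching 2-cells only kills elements of $\pi_1$. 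Your route is more uniform but incurs exactly the bookkeeping you flag: a 2-cell fold drags along boundary identifications, and your justification ``simply connected point-identifications'' is not quite an argument. The clean way to fill that gap is to note that since $D_1$ and $D_2$ already share an edge in the correct position, the forced identification of $\partial D_1$ with $\partial D_2$ factors as a finite sequence of ordinary Stallings edge folds (zipping around the boundary from the shared edge), after which identifying the now-redundant 2-cell is a $\pi_1$-isomorphism. Both constructions produce the same $C$ by the universal property, and your inductive verification of that property is equivalent to the paper's direct check.
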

\begin{proof}
  Folding shows that the map of 1-skeleta factors as
  \[
    A^{(1)}\to C^{(1)}\immerses B^{(1)}
  \]
  where $A^{(1)}\to C^{(1)}$ is surjective and $\pi_1$-surjective. We
  now construct $C$ by pushing the attaching maps of the 2-cells of
  $A$ forward to $C^{(1)}$ and identifying any 2-cells with the same
  image in $B$ and equal attaching maps. The resulting map $A\to C$ is
  surjective and $\pi_1$-surjective. We next check that the
  natural map $C\to B$ is an immersion.

  Since $C\to B$ is a morphism, it can only fail to be locally
  injective at a point $p\in C$ if two higher-dimensional cells
  incident at $p$ have the same image in $B$.  The map of
  1-skeleta is an immersion, so this can only occur if two 2-cells
  $c_1,c_2$ in $C$, incident at $p$, have the same image in $B$.
  Because the attaching maps of $c_1$ and $c_2$ agree at $p$ and
  $C^{(1)}\to B^{(1)}$ is an immersion, it follows that the attaching
  maps of $c_1$ and $c_2$ agree everywhere. Therefore, $c_1$ and $c_2$
  are equal in $C$ by construction.
  
It remains to prove the universal property.  This fact is standard for graphs, which defines the required immersion of 1-skeleta $C^{(1)}\immerses D^{(1)}$ uniquely. Let $c$ be a 2-cell in $C$ and let $a_1,a_2$ be preimages of $c$ in $A$.  By construction, $a_1$ and $a_2$ have the same boundary in $C^{(1)}$ and the same image in $B$.  Therefore, their images $d_1, d_2$ respectively in $D$ have the same boundary in $D^{(1)}$ and the same image in $B$. But $D\immerses B$ is an immersion, and it follows that $d_1=d_2$. Therefore, we may extend the map $C^{(1)}\to D^{(1)}$ across $c$ in a unique way, as required.
\end{proof}

A \emph{free edge} of a 2-complex is an edge of the 1-skeleton that is not in the image of the attaching map of any 2-cell.   The next result appeals to the Scott lemma (which plays a crucial role in the proof of coherence for 3-manifold groups) to represent a finitely generated, freely indecomposable subgroup by an immersion from a compact 2-complex without free edges or faces.

\begin{lemma}\label{lem: Representing subgroups by immersions without free faces or edges}
Let $X$ be a 2-complex, $G=\pi_1(X)$, and $H$ a non-trivial, finitely generated, freely indecomposable subgroup of $G$.   There is an immersion from a compact irreducible 2-complex without free edges $f:Y\immerses X$ such that $f_*\pi_1Y$ is conjugate to $H$.
\end{lemma}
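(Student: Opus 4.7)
The plan is to build $Y$ in three stages: first represent $H$ as the $\pi_1$-image of a morphism from a finite 2-complex into $X$, then upgrade that morphism to an immersion via Lemma \ref{lem: Folding 2-complexes}, and finally reduce by minimizing cell counts, exploiting the free indecomposability of $H$ to handle any free edges that arise.

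For the first stage I invoke Scott's Lemma. Let $\hat{X}\to X$ be the covering space corresponding to $H$, so that $\pi_1\hat{X}\cong H$. Since $H$ is finitely generated, loops in $\hat{X}^{(1)}$ representing a generating set lie in a finite connected subcomplex $\hat{Y}_0\subseteq\hat{X}$, and the inclusion induces a surjection $\pi_1\hat{Y}_0\twoheadrightarrow H$. The composition $\hat{Y}_0\hookrightarrow\hat{X}\to X$ is automatically an immersion (inclusions into covers are locally injective), and its image on $\pi_1$ is conjugate to $H$. This produces at least one valid candidate $Y$.

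Now among all compact connected 2-complexes $Y$ admitting an immersion $f\colon Y\immerses X$ with $f_*\pi_1 Y$ conjugate to $H$, choose one minimizing lexicographically the cell counts $(|Y^{(2)}|,|Y^{(1)}|,|Y^{(0)}|)$. Such a $Y$ exists by the first stage. A free face can be collapsed across its incident 2-cell, strictly reducing cells while preserving the immersion and the $\pi_1$-image, contradicting minimality. Likewise a free edge incident to a valence-$1$ vertex may be collapsed. The only remaining possibility is a free edge $e$ whose endpoints both have valence at least $2$ in $Y^{(1)}$, and ruling this out is the main obstacle.

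Removing such an $e$ expresses $\pi_1 Y$ either as a nontrivial free product $\pi_1 Y_a * \pi_1 Y_b$ (when $Y\setminus e$ is disconnected, with components $Y_a$, $Y_b$) or as $\pi_1(Y\setminus e) * \mathbb{Z}$ (when $Y\setminus e$ is connected, the $\mathbb{Z}$ coming from a loop through $e$). In either case the composition $\pi_1 Y\twoheadrightarrow H$ becomes a surjection from a nontrivial free product onto the freely indecomposable group $H$. By Grushko's theorem, applied with the trivial decomposition $H\cong H * 1$, we can rewrite $\pi_1 Y$ as a free product in which one factor maps onto $H$ and the other lies in the kernel. The technical crux is then realizing this algebraic rearrangement by topological moves on $Y$, such as edge slides on $Y^{(1)}$ followed by a re-fold via Lemma \ref{lem: Folding 2-complexes}, so as to produce a strictly smaller complex still immersing into $X$ with the same $\pi_1$-image. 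This would contradict minimality and rule out non-collapsible free edges, completing the proof.
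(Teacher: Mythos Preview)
Your proof has a genuine gap in the final stage, and the root cause is that you invoke Scott's lemma in name only without extracting its actual content.

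In your first stage you say you ``invoke Scott's Lemma'' but then simply take a compact subcomplex $\hat Y_0$ of the $H$-cover; this gives a $\pi_1$-surjection $\pi_1\hat Y_0\twoheadrightarrow H$, but nothing more. The point of Scott's lemma is stronger: it provides a finitely presented $H'$ and a surjection $H'\twoheadrightarrow H$ that \emph{does not factor through any nontrivial free product}. If you realize $H'\to G$ by a morphism $Z'\to X$ and fold to get $Z'\to Z\looparrowright X$, then $\pi_1 Z$ is a quotient of $H'$ still surjecting $H$, so $\pi_1 Z$ itself is freely indecomposable. This is exactly what the paper does, and it is what makes the free-edge step trivial: after collapsing free faces to get $Y'\subseteq Z$, any free edge of $Y'$ must be separating with one complementary component simply connected, and you just discard those components.

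Without this, your third stage does not go through. Your appeal to Grushko is not correct as stated: Grushko's theorem concerns surjections \emph{onto} a free product, not \emph{from} one, and there is no general statement that a surjection $A*B\twoheadrightarrow H$ with $H$ freely indecomposable can be rewritten so that one free factor already surjects $H$ (consider $\mathbb{Z}*\mathbb{Z}\twoheadrightarrow\mathbb{Z}/6$ sending the generators to $2$ and $3$). Even if some algebraic rearrangement existed, you yourself flag the ``technical crux'' of realizing it by topological moves on $Y$ compatible with the immersion into $X$, and you do not carry this out. The fix is not to work harder on this step but to arrange from the outset, via Scott's lemma, that $\pi_1 Y$---not merely $H$---is freely indecomposable.
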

\begin{proof}
By the Scott lemma  \cite[Lemma 2.2]{scott_finitely_1973}, there is a surjection from a finitely presented group $H'\onto H$ that does not factor through a free product.  Since $H'$ is finitely presented, there is a morphism of combinatorial 2-complexes $Z'\to X$ that represents the composition $H'\to H \to G$.  By Lemma \ref{lem: Folding 2-complexes}, this morphism factors as
\[
Z'\to Z\immerses X
\]
where $Z'\to Z$ is $\pi_1$-surjective.  By the conclusion of the Scott lemma, $\pi_1Z$ is freely indecomposable.  Let $Y'\subseteq Z$ be a deformation retract of $Z$ obtained by iteratively collapsing free faces.  Since $\pi_1Y'=\pi_1Z$ is freely indecomposable, any free edges of $Y'$ are separating, and one complementary component of each free edge is simply connected. Let $Y$ be the unique non-simply-connected component obtained from $Y'$ by deleting free edges, and let $f:Y\immerses X$ be the natural immersion.  Then $Y$ is as required: $f_*\pi_1Y$ is conjugate to $f_*\pi_1Y'=H$, and $Y$ has neither free faces nor free edges.
\end{proof}

The next two lemmas show that a finitely generated subgroup can be represented by a direct limit of immersions of irreducible 2-complexes. We start with the freely indecomposable case.

\begin{lemma}
  \label{lem: Freely indecomposable direct limit}
  Let $X$ be a 2-complex, $G=\pi_1(X)$, and $H\leq G$ a finitely
  generated, freely indecomposable subgroup. Then there is a sequence of $\pi_1$-surjective immersions of
  compact, connected two-complexes 
\[
Y_0\immerses Y_1\immerses\cdots\immerses Y_i\immerses\cdots X
\]
 with the following properties.\begin{enumerate}[(i)]
  \item Each $Y_i$ is irreducible.
  \item  $H=\varinjlim\pi_1 Y_i$
 \item The number of free edges of $Y_i$ is uniformly bounded.
    \end{enumerate}
\end{lemma}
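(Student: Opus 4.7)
The plan is to observe that this lemma follows almost trivially from Lemma \ref{lem: Representing subgroups by immersions without free faces or edges} by taking a \emph{constant} sequence.

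That lemma already supplies an immersion $f\colon Y\immerses X$ from a compact, connected, irreducible 2-complex $Y$ without free edges such that $f_*\pi_1 Y$ is conjugate to $H$. After adjusting the basepoint of $Y$ and the connecting path to the basepoint of $X$ we may identify $f_*\pi_1 Y = H$. I would then set $Y_i := Y$ for every $i\geq 0$, taking each immersion $Y_i\immerses Y_{i+1}$ to be the identity of $Y$ (and the composite $Y_i\immerses X$ to be $f$). Each identity map is trivially a $\pi_1$-surjective immersion, and the three desired properties are immediate: each $Y_i=Y$ is irreducible by Lemma \ref{lem: Representing subgroups by immersions without free faces or edges}, so (i) holds; the direct limit of $\pi_1 Y_i$ along identity maps is $\pi_1 Y = H$, so (ii) holds; and $Y$ has no free edges at all, so (iii) holds with uniform bound $0$.

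The reason the lemma is nonetheless phrased as a direct-limit statement, despite the freely indecomposable case collapsing to a single complex, is presumably to match the format of the companion result treating the general (possibly freely decomposable) case. There, following a Grushko decomposition of $H$ into freely indecomposable free factors together with a free part, one must glue the constant sequences supplied by the present lemma (one per indecomposable factor) with a Stallings-type folding sequence for the free part, and the resulting sequence genuinely grows.

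The only real ``work'' here is hidden inside Lemma \ref{lem: Representing subgroups by immersions without free faces or edges}, which invoked Scott's lemma to produce a finitely presented approximation of $H$. If one wanted a proof avoiding that lemma and instead building the $Y_i$ directly from a folding tower approximating the generators of $H$, the principal obstacle would be condition (iii): iteratively collapsing free faces at each stage keeps $Y_i$ irreducible and a standard folding argument yields (ii), but nothing a priori bounds the number of \emph{free edges} of $Y_i$ uniformly without invoking the freely indecomposable structure of the limit $H$, which is essentially what Scott's lemma provides.
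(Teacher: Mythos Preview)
There is a genuine gap. Lemma~\ref{lem: Representing subgroups by immersions without free faces or edges} only asserts that the \emph{image} $f_*\pi_1 Y$ is conjugate to $H$; it does not assert that $f_*$ is injective, so you cannot conclude $\pi_1 Y \cong H$. Immersions of graphs are $\pi_1$-injective, but immersions of 2-complexes are not: the folding construction of Lemma~\ref{lem: Folding 2-complexes} identifies 2-cells with equal attaching maps and equal image in $X$, and this does not impose enough relations to kill the kernel of $\pi_1 Z\to G$. Concretely, $Y$ is compact so $\pi_1 Y$ is finitely presented, whereas $H$ is at this point only known to be finitely generated; if $H$ happened not to be finitely presented (which is exactly what the paper is ultimately trying to rule out), the equality $\pi_1 Y = H$ would be impossible. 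Your constant sequence therefore has direct limit $\pi_1 Y$, which surjects $H$ but may be strictly larger, so condition~(ii) fails.

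This is precisely why the paper's proof is not a one-liner. It starts from the same $Y$ (wedged with an interval to fix the basepoint), then enumerates the conjugacy classes $(r_i)$ of $\ker(f_{0*}\colon \pi_1 Y_0\to G)$, and at stage $i+1$ glues a reduced van Kampen diagram for $r_{i+1}$ onto $Y_i$ and refolds. The point of the growing sequence is to kill this kernel in the limit; the delicate content is checking that gluing and folding never create free faces (using that $r_{i+1}$ avoids the free edges of $Y_0$ and that the diagram is reduced) and that the only free edges ever present lie in the added interval $I$, giving the uniform bound in~(iii). Your reading of the direct-limit formulation as mere bookkeeping for the Grushko step is therefore mistaken: the sequence already does real work in the freely indecomposable case.
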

\begin{proof}
If $H$ is trivial then so is the result.  Otherwise, let $f:Y\immerses X$ be the immersion guaranteed by Lemma \ref{lem: Representing subgroups by immersions without free faces or edges}, so that $f_*\pi_1Y$ is conjugate to $H$.  Let $f_0:Y_0\immerses X$ be a wedge of $Y$ with an interval $I$ so that, fixing a basepoint at the end of the interval, $f_{0*}\pi_1Y_0=H$. Let $(r_i)$ be an enumeration of representatives of the conjugacy  classes of $\ker f_{0*}$, where each $r_i$ is an immersed combinatorial loop in the 1-skeleton of $Y\subseteq Y_0$.  

We now construct the immersions $Y_i\immerses Y_{i+1}$ inductively, assuming that $\{r_0,\ldots, r_i\}$ represent elements of the kernel of $\pi_1Y_i\to G$.   Let $E_{i+1}\to X$ be a reduced van Kampen diagram for $r_{i+1}$, and let $Z=Y_i\cup_{r_{i+1}}E_{i+1}$.  Since $r_{i+1}$ did not cross any free edges of $Y_i$, $Z$ does not have any free faces.  We now apply Lemma \ref{lem: Folding 2-complexes} to the natural map $Z\to X$, which yields
\[
Z\to Y_{i+1}\immerses  X
\]
for the desired 2-complex $Y_{i+1}$.    Next, we prove properties (i), (ii) and (iii).

By construction, $Y_0$ has no free faces. Therefore, we may prove (i) by induction: assuming that $Y_i$ has no free faces, we claim that $Y_{i+1}$ has no free faces.  Suppose by way of contradiction that an edge $e$ is a free face of a 2-cell $c$ in $Y_{i+1}$. Since $Y_i$ immerses into $Y_{i+1}$, and $Y_i$ has no free faces, it follows that $e$ is not in the image of $Y_i$. Therefore, $e$ is the image of an edge $e'$ in the interior of $E_{i+1}$. The two neighbouring 2-cells of $E_{i+1}$ both map to $c$ folding across $e'$, contradicting the hypothesis that $E_{i+1}$ is reduced.

Property (ii) is immediate by construction.  For (iii), simply note that any free edges of $Y_i$ are the image of an edge in the interval $I$.
\end{proof}

Finally, we deal with possibly freely decomposable subgroups, by appealing to the Grushko decomposition.

\begin{lemma}
  \label{lem: direct limit}
  Let $X$ be a 2-complex, $G=\pi_1(X)$, and $H\leq G$ a finitely
  generated subgroup. Then there is a sequence of $\pi_1$-surjective immersions of
  compact, connected two-complexes 
\[
Y_0\immerses Y_1\immerses\cdots\immerses Y_i\immerses\cdots X
\]
 with the following properties.\begin{enumerate}[(i)]
  \item Each $Y_i$ is irreducible.
  \item  $H=\varinjlim\pi_1 Y_i$
 \item The number of edges of $Y_i$ that are not incident at a 2-cell is uniformly bounded.
    \end{enumerate}
\end{lemma}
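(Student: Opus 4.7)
The plan is to reduce to the freely indecomposable case handled by Lemma~\ref{lem: Freely indecomposable direct limit} via the Grushko decomposition of $H$. The case $H = 1$ is immediate; otherwise, Grushko's theorem lets us write $H = H_1 * H_2 * \cdots * H_k$ as a free product of non-trivial, freely indecomposable subgroups of $G$ (possibly infinite cyclic). After conjugating suitably, we realize this decomposition with respect to a common basepoint $x_0 \in X$.

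For each $j$, Lemma~\ref{lem: Freely indecomposable direct limit} provides a sequence of $\pi_1$-surjective immersions
\[
Y_{j,0} \immerses Y_{j,1} \immerses \cdots \immerses X,
\]
with each $Y_{j,i}$ irreducible, $\varinjlim \pi_1 Y_{j,i} = H_j$, and the number of free edges of $Y_{j,i}$ bounded uniformly by some $N_j$. Choose basepoints of the $Y_{j,i}$ that all map to $x_0$, compatibly with the immersions $Y_{j,i} \immerses Y_{j,i+1}$. Form the wedge $A_i := \bigvee_j Y_{j,i}$ at the basepoints, and apply Lemma~\ref{lem: Folding 2-complexes} to the natural morphism $A_i \to X$ to obtain a factorization $A_i \to Y_i \immerses X$ with $A_i \to Y_i$ surjective and $\pi_1$-surjective. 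The induced wedge map $A_i \immerses A_{i+1}$ is itself an immersion, so composing with $A_{i+1} \to Y_{i+1}$ factors $A_i \to X$ through the immersion $Y_{i+1} \immerses X$, and the universal property in Lemma~\ref{lem: Folding 2-complexes} produces unique $\pi_1$-surjective immersions $Y_i \immerses Y_{i+1}$.

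Property~(ii) then follows because directed colimits commute with free products in the category of groups: $\varinjlim \pi_1 A_i = *_j \varinjlim \pi_1 Y_{j,i} = *_j H_j = H$, and the $\pi_1$-surjective folding yields $\varinjlim \pi_1 Y_i = H$ as a subgroup of $G$. Property~(iii) follows because any edge of $Y_i$ not incident at a 2-cell is the image of such an edge of $A_i$, which lies in some $Y_{j,i}$; summing the uniform bounds $N_j$ gives the required uniform bound on free edges of $Y_i$.

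The main technical obstacle is property~(i), irreducibility of $Y_i$. The wedge $A_i$ clearly inherits irreducibility from its summands, so the issue is that folding could, a priori, identify two or more 2-cells from distinct wedge summands whose pushed-forward attaching maps coincide in $Y_i^{(1)}$, and such an identification might in principle collapse the boundary-edge count at some edge of $Y_i^{(1)}$ to exactly one, producing a free face. The plan to rule this out is to show that any alleged free face $\bar e$ of $Y_i$ would force a free face in one of the irreducible summands $Y_{j,i}$: lift the unique 2-cell of $Y_i$ whose boundary crosses $\bar e$ to a representative 2-cell $c$ in some $Y_{j,i}$, and let $e \in Y_{j,i}^{(1)}$ be the boundary edge of $c$ corresponding to the single preimage of $\bar e$. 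By carefully tracing the 2-cell equivalence relation through the folding and using that each edge of $c$'s boundary crossing a preimage of $\bar e$ in $Y_{j,i}^{(1)}$ maps under folding to the unique preimage-edge in $S^1_{[c]}$, one argues that the total boundary-edge count at $e$ in $Y_{j,i}$ matches the count at $\bar e$ in $Y_i$, namely one, contradicting Lemma~\ref{lem: Freely indecomposable direct limit}(i). Making this bookkeeping precise is where the real work of the proof lies.
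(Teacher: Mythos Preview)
Your approach is essentially the paper's: take the Grushko decomposition, apply Lemma~\ref{lem: Freely indecomposable direct limit} to each non-free factor, wedge, fold via Lemma~\ref{lem: Folding 2-complexes}, and extract the connecting immersions $Y_i\immerses Y_{i+1}$ from the universal property. The only cosmetic difference is that the paper peels off the free part $F_q$ and represents it directly by an immersed graph $Z\immerses X^{(1)}$, rather than feeding the infinite-cyclic factors through Lemma~\ref{lem: Freely indecomposable direct limit} as you do.

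Where you diverge is in your treatment of property~(i): the paper simply says it is ``clear from the construction,'' and your proposed bookkeeping is more work than needed (and not quite right as stated --- you want the incidence count at $e$ in $Y_{j,i}$ to \emph{equal} the count at $\bar e$ in $Y_i$, but other wedge summands could contribute, so you only get an inequality). The clean one-line argument is this: each inclusion $Y_{j,i}\to Y_i$ is an immersion, since it is a left factor of the immersion $Y_{j,i}\immerses X$ through $Y_i\immerses X$, and any left factor of a locally injective map is locally injective. If $\bar e$ were a free face of $Y_i$, the unique incident 2-cell lifts to a 2-cell in some $Y_{j,i}$ whose boundary crosses a preimage $\tilde e$ of $\bar e$; irreducibility of $Y_{j,i}$ gives at least two 2-cell incidences at $\tilde e$, and since immersions are injective on links these push forward to at least two incidences at $\bar e$, a contradiction. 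This is exactly the mechanism already used inside the proof of Lemma~\ref{lem: Freely indecomposable direct limit}.
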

\begin{proof}
Let $H=H_0*\cdots*H_p*F_q$ be the Grushko decomposition of $H$.   For each $1\leq r\leq p$, let
\[
Y^r_0\immerses Y^r_1\immerses\cdots\immerses Y^r_i\immerses\cdots X
\]
 be the sequence provided by applying Lemma \ref{lem: Freely indecomposable direct limit} to $H_r$. Let $Z\immerses X$ be a graph immersed in the 1-skeleton with $\pi_1Z=F_q$.  For each $i$, let
 \[
Y'_i=Y^1_i\vee\cdots\vee Y^p_i\vee Z 
 \]
 and let $Y_i\immerses X$ be the immersion obtained by applying Lemma \ref{lem: Folding 2-complexes} to $Y'_i$.  
 
 The required immersion $Y_i\immerses Y_{i+1}$ exists by the universal property of Lemma \ref{lem: Folding 2-complexes}: see the following commutative diagram.
 
 \begin{center}
  \begin{tikzcd}
 & Y'_{i+1} \arrow{dr}\arrow[bend left=15]{drr}& \\ 
 Y'_i\arrow{rr}\arrow{dr}\arrow{ur} && Y_{i+1}\arrow[loop-math to]{r} & X \\
  & Y_{i}\arrow[loop-math to,dotted]{ur}{\exists!}\arrow[loop-math to, bend right=15]{urr} & 
  \end{tikzcd}
 \end{center}
 Properties (i), (ii) and (iii) are clear from the construction.  
\end{proof}

We are now ready to prove our main result.

\begin{proof}[Proof of Theorem \ref{thm: coherence}]
Realize $G$ as the fundamental group of a one-relator orbicomplex $X$.  Let $H$ be a finitely generated subgroup of $G$. Let $G_0\leq G$ be  a torsion-free subgroup of finite index, corresponding to the unwrapped cover $X_0\immerses X$ provided by Theorem \ref{thm: Unwrapped cover}.  Since a finite extension of a finitely presented group is finitely presented, we may replace $H$ by $H\cap G_0$, and so assume that $H\leq G_0$.  Consider the sequence of immersions
\[
Y_0\immerses Y_1\immerses\cdots\immerses Y_i\immerses\cdots X_0
\]
provided by Lemma \ref{lem: direct limit}, taking $X_0$ for $X$.  By Corollary \ref{cor: two cells bound}, the number of 2-cells of each $Y_i$ is bounded. Each 2-cell of $Y_i$ is a copy of the unique 2-cell of $X$, hence has boundary of bounded length.  Combining this with item (iii) of Lemma \ref{lem: direct limit}, we see that the number of 1-cells (and hence also 0-cells) of $Y_i$ is also bounded. Since $X_0$ is finite, there are only finitely many combinatorial types of immersions $Y_i\immerses X_0$. Because $Y_i\immerses X_0$ factors through $Y_{i+1}\immerses X_0$, there is an infinite subsequence
\[
Y_{i_1}\immerses Y_{i_2}\immerses\cdots \immerses Y_{i_j}\immerses\cdots X_0
\]
so that each map $Y_{i_j}\immerses Y_{i_{j+1}}$ is a homeomorphism;
therefore, $H=\varinjlim \pi_1Y_{i_j}=\pi_1(Y_{i_1}) $ is finitely
presented, as required.
\end{proof}

\section{Groups with good stackings}

The results of \cite{louder-wilton} apply equally well to a class of
groups which is rather larger than the class of one-relator
groups.

\begin{definition}[Stacking]\label{defn: Good stacking}
Let $X$ be a 2-dimensional orbicomplex and let
\[
\Lambda: \coprod S^1\to X^{(1)}
\]
be the coproduct of the attaching maps of the 2-cells.  A \emph{stacking} of $X$ is a lift of $\Lambda$ to an embedding
\[
\widehat{\Lambda}:\mathbb{S}\equiv\coprod S^1\into X^{(1)}\times\mathbb{R}~; 
\]
write $\widehat{\Lambda}(x)=(\Lambda(x),h(x))$.  A stacking is called \emph{good} if, for each component $S$ of the domain of $\Lambda$, there is a point $a\in S$ so that $h(a)\geq h(x)$ for all $x\in\mathbb{S}$ with $\Lambda(a)=\Lambda(x)$, and there is also a point $b\in S$ so that so that $h(b)\leq h(x)$ for all $x\in\mathbb{S}$ with $\Lambda(b)=\Lambda(x)$.
\end{definition}

The results of \cite{louder-wilton} apply to the fundamental groups of orbicomplexes with good stackings. 

\begin{definition}\label{defn: Branched good stacking}
We say that a group has a \emph{good stacking} if it is the fundamental group of a compact, 2-dimensional orbicomplex that admits a good stacking.  We say it has a \emph{good branched stacking} if it has a good stacking, and every 2-cell has a cone point of index at least 2.
\end{definition}

Every one-relator group admits a good stacking \cite[Lemma 3.4]{louder-wilton}, which is branched if the group has torsion.  Corollary \ref{cor: two cells bound} applies to groups with a good branched stacking.  The proof of Theorem \ref{thm: coherence} applies verbatim to groups with good branched stackings, except that groups with good branched stackings are not known to admit unwrapped covers -- that is, the analogue of Theorem \ref{thm: Unwrapped cover} is unknown.

However, we conjecture that Wise's proof that one-relator groups with torsion are residually finite goes through for groups with branched good stackings.

\begin{conjecture}\label{conj: Qc hierarchy}
If $G$ has a good branched stacking then $G$ is hyperbolic, and has a virtual quasiconvex hierarchy, in the sense of \cite{wise_structure_2012}.
\end{conjecture}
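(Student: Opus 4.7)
The plan is to follow the strategy of \cite{wise_structure_2012} as closely as possible, replacing the ingredients that use the one-relator structure with analogues deduced from the good branched stacking. There are three principal stages: establishing hyperbolicity, constructing a hierarchy, and verifying quasiconvexity of its edge groups.

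For hyperbolicity, the first step is to prove an analogue of Newman's spelling theorem for groups with good branched stackings. The height function $h$ in Definition \ref{defn: Good stacking} produces, at each component $S$ of $\mathbb{S}$, a topmost arc and a bottommost arc where the attaching map is injective into $X^{(1)}$; together with the branching hypothesis (cone point of index $\geq 2$), these should force that any reduced disc diagram over $X$ contains a 2-cell whose attaching map traverses more than half of some relator along the boundary of the diagram. This is exactly the combinatorial input that gives a Dehn presentation and thus hyperbolicity, just as in the one-relator case.

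For the hierarchy, the idea is to mimic the Magnus–Moldavanski induction using the stacking rather than an explicit presentation. One would select an edge $e$ of $X^{(1)}$ meeting a topmost or bottommost arc of $\widehat{\Lambda}$ and attempt to express $\pi_1X$ as an HNN extension or amalgamated product along the subcomplex obtained by removing $e$ (or a splitting surface transverse to $e$). The branched stacking structure on $X$ should restrict to a branched stacking on the resulting splitting, so that the procedure can be iterated. Some measure of complexity (total length of attaching maps weighted by the stacking) should decrease, so the hierarchy terminates, presumably at a virtually free group. The absence of unwrapped covers noted in the excerpt means one will likely have to work with orbicomplexes throughout and use the covering theory of \cite{bridson_metric_1999} rather than passing to a genuine 2-complex.

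The quasiconvexity of the edge groups is likely to be the main obstacle. Even in the one-relator case this is the hardest step in \cite{wise_structure_2012}, where it rests delicately on the combinatorics of Magnus subgroups, on passing to unwrapped covers, and on small cancellation properties of the relator. Without an explicit relator and without unwrapped covers, one needs an intrinsic argument: the cleanest approach would be to show directly from the stacking that the splitting subcomplex is locally convex (in a suitable combinatorial sense) in the universal cover, using the spelling theorem from the first step to control how geodesics interact with it. Once quasiconvexity is in hand, Agol's theorem applied to the resulting virtual quasiconvex hierarchy yields virtual specialness, and the full conclusion follows as in \cite{wise_structure_2012}.
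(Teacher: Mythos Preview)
The statement you are addressing is a \emph{conjecture} in the paper, not a theorem: the authors explicitly offer no proof, only the remark that ``Wise's proof that one-relator groups with torsion are residually finite goes through for groups with branched good stackings'' is something they \emph{conjecture}. So there is no proof in the paper to compare against.

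Your proposal is likewise not a proof but a programme, and you are candid about this: the hyperbolicity step rests on a hoped-for spelling theorem that you do not establish; the hierarchy step asserts without justification that a good branched stacking restricts to one on the pieces of a Magnus-style splitting and that some complexity decreases; and you yourself flag quasiconvexity of the edge groups as ``the main obstacle'', noting that the one-relator argument depends on unwrapped covers which are unavailable here (exactly the difficulty the paper highlights). Each of these is a genuine gap, not a routine verification. In short, your outline is a reasonable restatement of the strategy the authors themselves gesture at, but it does not advance beyond the paper's own position: the conjecture remains open, and nothing in the proposal closes it.
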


By the results of \cite{wise_structure_2012}, Conjecture \ref{conj: Qc hierarchy} would imply that every such group is virtually torsion-free, and hence coherent by the same proof as Theorem \ref{thm: coherence}.

\bibliographystyle{amsalpha}

\providecommand{\bysame}{\leavevmode\hbox to3em{\hrulefill}\thinspace}
\providecommand{\MR}{\relax\ifhmode\unskip\space\fi MR }
\providecommand{\MRhref}[2]{%
  \href{http://www.ams.org/mathscinet-getitem?mr=#1}{#2}
}
\providecommand{\href}[2]{#2}

\Addresses

\end{document}